\title{A note on equivalences between various mixing scales}
\author{Bohan Zhou }
\email{Bohan.Zhou@dartmouth.edu}
\address{Department of Mathematics, Dartmouth College, Hanover, NH 03755}
\date{December 31, 2019}
\begin{document}

\maketitle
%\begin{abstract}
%\end{abstract}
\section{Introduction}
To quantify the degree of homogenization of scalars and the efficiency of mixing phenomenon in many field of science and engineering, a natural choice is the variance of the concentration of scalars. Let $\rho$ denote the density function of a scalar on a domain $\Omega\subseteq \R^d$, then the variance is 
\[\mathrm{Var}(\rho)=\frac{1}{\abs{\Omega}}\int_{\Omega}\rho^2(x)\diff{x}-\left(\frac{1}{\abs{\Omega}}\int_{\Omega}\rho(x)\diff{x}\right)^2,\]
where $\abs{\Omega}$ denotes the Lebesgue measure of $\Omega$. A heuristic observation is that the smaller the variance is, the better the homogenization or mixing is.

This measurement works well when the mechanism has diffusion effect. However, when it applies on the transport equation, it fails due to the conservation of any $L^p$ norm of scalars. People still hope to measure the mixing even without any diffusion, since the density $\rho$ could converge to zero weakly only with transport effect. Hence we need a measurement to detect small scales. Note that mixing here needs not require miscible fluids. 

Mathew et al. in \cite{mathew2005multiscale} first introduced a homogeneous negative Sobolev norm $\norm{\cdot}_{\dot{H}^{-1/2}}$ to quantify the level of mixedness. After that, Lin et al. of \cite{lin2011optimal} generalized it to $\norm{\cdot}_{\dot{H}^{-\alpha}}$ for $\alpha\in (0,1]$. And there is a detailed review paper \cite{thiffeault2012review} by Thiffeault addressing about related multiscale norms and its applications in many interesting problems.

In the meanwhile, Bressan in his conjecture \cite{bressan2003lemma} introduced a more geometric sense measurement for density function with value $\pm 1$, which is related with density of a set in geometric measure sense. Yao et al. in \cite{yao2014mixing} generalized this geometric measurement for bounded density functions.

Note though both measurements can be regarded as a scale of length, to quantify small scales. However, Lunasin et al. in \cite{lunasin2012optimal} provided with two simple but impressive examples, to demonstrate that there is no simple equivalence between those two measurements, that is,
\[C_1 \norm{\cdot}_1\leqslant \norm{\cdot}_2 \leqslant C_2 \norm{\cdot}_1\]
does not hold for some constant $C_1,C_2>0$. This is the starting point of this expository paper.

\section{Preliminaries}
In the paper, we work on $d$-dimensional flat torus $\T^d=\R^d/\Z^d$ or periodic box $[0,\lambda]^d$ of side-length $\lambda$.

For $s\in\R$, we say that a distribution $\rho$ belongs to $\dot{H}^s(\T^d)$ if
\begin{align*}
    \norm{\rho}_{\dH^s(\T^d)}\defeq\sum_{k\in \Z^d}\abs{k}^{2s}\abs{\hat{\rho}(k)}^2<\infty.
\end{align*}
Any function $f\in L^2(\T^d)$ with $\int_{\T^d} f\diff{x}=0$ belongs to $\dH^s(\T^d)$ for $s<0$.

For any $r>0$, we define the averaging operators $A_r$ for any scalar field $\rho\in L^{\infty}(\T^d)$:
\[A_r \rho(x):=\avintdis_{B(x,r)}\rho(y)\diff{y}=\frac{\int_{B(x,r)}\rho(y)\diff{y}}{\abs{B(x,r)}}.\]

\begin{definition}
For any scalar field $\rho$ on $\T^d$ with zero mean $\int_{\T^2}\rho\diff{x}=0$, we call $\norm{\rho}_{\dH^{-1}}$ as the functional mixing scale.
\end{definition}

\begin{definition}
For any scalar field $\rho\in L^{\infty}(\T^d)$, given an accuracy parameter $0<\kappa<1$, the \textit{geometric mixing scale} of $\rho$, denoted by $\G(\rho;\kappa)$, is the infimum of all $r>0$ such that for every $x\in\T^d$, there holds
\begin{align}
\label{GMS}
    \abs{A_r\rho(x)}\leqslant\kappa \norm{\rho}_{L^{\infty}}.
\end{align}
And we say the scalar field $\rho$ is $\kappa$-mixed to the scale $\G(\rho;\kappa)$.
\end{definition}

In both scales, the smaller the mixing scale of the scalar field is, the better the scalar field is expected to be mixed.

However, the following example shows a defect of the geometric mixing scale in the above definition. Roughly speaking, for any $r>\G(\rho;\kappa)$, a scalar field that is well mixed in a small scale is not necessary well mixed in a larger scale.
\begin{example}
There exist a scalar field $\rho\in L^{\infty}(\T^d)$ and $0<\kappa<1$ such that there exists an $r>\G(\rho;\kappa)$ with
\begin{align*}
    \abs{A_r\rho(x)}>\kappa \norm{\rho}_{L^{\infty}}\qquad\mathrm{for~some~}x\in\T^d.
\end{align*}
Moreover, let $A$ be the set of such $x$. Then we have $\abs{A}>0$.
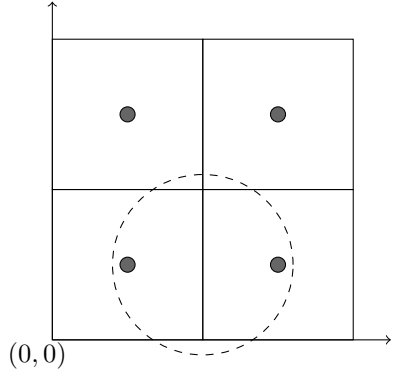
\begin{figure}[h]
\captionsetup{justification=centering}
\centering
\begin{tikzpicture}

\draw  (0,0) rectangle (2,2);
\draw (2,0) rectangle (4,2);
\draw (0,2) rectangle (2,4);
\draw (2,2) rectangle (4,4);

\draw[fill=black!60!white] (1,1) circle (0.1);
\draw[fill=black!60!white] (3,1) circle (0.1);
\draw[fill=black!60!white] (1,3) circle (0.1);
\draw[fill=black!60!white] (3,3) circle (0.1);
\draw[dashed] (2,1) circle (1.2);

\draw[->] (0,0) -- (4.5,0);
\draw[->] (0,0) -- (0,4.5);
\node at (-0.2,-0.2) {$(0,0)$};

\end{tikzpicture}
\caption{\label{fig: defect}This example shows a defect of the geometric mixing scale. Each box is of length 1, each solid ball is of radius $\varepsilon$, the dashed circle is of radius $\frac{1}{2}+\varepsilon$. The scalar field is well-mixed at $\G(\rho;\kappa)$, which is less than $\frac{1}{2}$, while it is not well-mixed at $\frac{1}{2}+\varepsilon$.}
\end{figure}
\end{example}
\begin{proof}
Let $\kappa=\frac{0.05^2\pi}{1-0.05^2\pi}\approx 0.0079$ and periodic scalar field $\rho(x)$ on $[0,1]^2$ is given by
\begin{equation}
    \rho(x)=\left\{
    \begin{aligned}
        &1,\qquad\qquad\quad x\in B_{\varepsilon}(\frac{1}{2},\frac{1}{2});\\
        &\frac{-\pi\varepsilon^2}{1-\pi\varepsilon^2},\qquad x\in [0,1]^2\setminus B_{\varepsilon}(\frac{1}{2},\frac{1}{2}).
    \end{aligned}
    \right.
\end{equation}
where $\varepsilon=\sqrt{\frac{\kappa}{(1+\kappa)\pi}}=0.05$. Note that $\norm{\rho}_{L^{\infty}}=1$.

First, we check that $\rho(x)$ is mean-zero on $[0,1]^2$ by
\[\int_{[0,1]^2}\rho(x)\diff{x}=\pi\varepsilon^2-\frac{\pi\varepsilon^2}{1-\pi\varepsilon^2}(1-\pi\varepsilon^2)=0.\]

Second, we claim $\G(\rho;\kappa)\leqslant \frac{1}{2}$.

We call each ball with density 1 by a \textit{solid ball}. As shown in the \prettyref{fig: defect}, given any $x\in [0,1]^2$, the ball $B_{0.5}(x)$ cannot contain more than one entire solid ball by the geometric relationship.

When $B_{0.5}(x)$ does not contain any portion of any solid ball, we have
\[\abs{A_{0.5}\rho(x)}=\frac{\pi\varepsilon^2}{1-\pi\varepsilon^2}=\kappa=\kappa\norm{\rho}_{L^{\infty}}.\]

When $B_{0.5}(x)$ contains an entire solid ball, note that $\rho(x)$ is mean-zero on $[0,1]^2$, we have
\[\abs{A_{0.5}\rho(x)}=\frac{(1-0.5^2\pi )\frac{\pi\varepsilon^2}{1-\pi\varepsilon^2}}{0.5^2\pi }<\kappa=\kappa\norm{\rho}_{L^{\infty}}.\]

By interpolation, for any $x\in [0,1]^2$, we have $\abs{A_{0.5}\rho(x)}\leqslant \kappa\norm{\rho}_{L^{\infty}}$, which implies that $\G(\rho;\kappa)\leqslant \frac{1}{2}$ by definition.

For $r=\frac{1}{2}+\varepsilon>\G(\rho;\kappa)$ and we pick ball $B_r(1,\frac{1}{2})$, then
\[\abs{A_{r}\rho(1,\frac{1}{2})}=\frac{1}{\pi r^2}\abs{2\pi\varepsilon^2-\frac{\pi\varepsilon^2}{1-\pi\varepsilon^2}(\pi r^2-2\pi\varepsilon^2)}\approx 0.0087>\kappa=\kappa\norm{\rho}_{L^{\infty}}.\]

Furthermore, by the construction, it is easy to see the set consisting of such $x$ is not a negligible set.
\end{proof}

Based on the above observation, we introduce another geometric mixing scale, the so-called \textit{strong geometric mixing scale}, denoted by $\SG(\rho)$.

\begin{definition}
For any given $0<\kappa<1$, the strong geometric mixing scale $\mathcal{SG}(\rho;\kappa)$ is given by:
\begin{align}
    \label{SGMS}
    \SG(\rho;\kappa)\defeq \sup_{x\in\T^d}\inf\left\{\varepsilon_x \bigg\rvert  \forall r\geqslant \varepsilon_x, \abs{A_r\rho(x)}\leqslant\kappa\norm{\rho}_{L^{\infty}}\right\}.
\end{align}
\end{definition}

\begin{lemma}
\label{lem: UBSGMS}
Let $\rho\in L^{\infty}([0,\lambda]^d)$ be a mean-zero scalar field. Given $0<\kappa<1$, for $r\geqslant\frac{\lambda\sqrt{d}}{1-(1-\kappa)^{1/d}}$ and for any $x\in [0,\lambda]^d$, we have \[\abs{A_r\rho(x)}\leqslant\kappa\norm{\rho}_{L^{\infty}}.\]
\end{lemma}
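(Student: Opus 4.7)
The plan is to exploit the mean-zero hypothesis together with periodicity: every lattice-aligned cube of side $\lambda$ that sits inside $B(x,r)$ integrates to zero, so the only contribution to $\int_{B(x,r)}\rho$ comes from cells straddling $\partial B(x,r)$. Once $r$ is large compared with the cube diameter $\lambda\sqrt{d}$, this ``boundary layer'' is only a small fraction of $\abs{B(x,r)}$, and matching that fraction to $\kappa$ will translate into exactly the stated lower bound on $r$.

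First, I extend $\rho$ periodically to $\R^d$ and tile $\R^d$ by the cubes $C_k=k\lambda+[0,\lambda]^d$, $k\in\Z^d$. Let $F$ be the union of those $C_k$ that are fully contained in $B(x,r)$. Since each $C_k$ is a translate of a full period and $\rho$ is mean-zero on $[0,\lambda]^d$, we have $\int_{C_k}\rho(y)\diff{y}=0$, and hence $\int_F\rho(y)\diff{y}=0$. Consequently,
\[
    \abs{A_r\rho(x)}=\frac{1}{\abs{B(x,r)}}\abs{\int_{B(x,r)\setminus F}\rho(y)\diff{y}}\leqslant \norm{\rho}_{L^\infty}\,\frac{\abs{B(x,r)}-\abs{F}}{\abs{B(x,r)}}.
\]

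Next, I estimate $\abs{F}$ from below via the inclusion $B(x,r-\lambda\sqrt{d})\subset F$, which is admissible because the hypothesis on $r$ already forces $r\geqslant\lambda\sqrt{d}$. Indeed, any $y$ in this smaller ball lies in a unique tile $C_k$ of diameter $\lambda\sqrt{d}$, so each $z\in C_k$ satisfies $\abs{z-x}\leqslant\abs{z-y}+\abs{y-x}\leqslant\lambda\sqrt{d}+(r-\lambda\sqrt{d})=r$, whence $C_k\subset B(x,r)$ and thus $C_k\subset F$. Taking Lebesgue measures gives $\abs{F}/\abs{B(x,r)}\geqslant (1-\lambda\sqrt{d}/r)^d$, which combined with the previous display yields
\[
    \abs{A_r\rho(x)}\leqslant \norm{\rho}_{L^\infty}\left(1-\left(1-\frac{\lambda\sqrt{d}}{r}\right)^d\right).
\]
The right-hand side is at most $\kappa\norm{\rho}_{L^\infty}$ precisely when $(1-\lambda\sqrt{d}/r)^d\geqslant 1-\kappa$, i.e.\ $r\geqslant \lambda\sqrt{d}/(1-(1-\kappa)^{1/d})$, which is the hypothesis.

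I do not expect a deep obstacle — the argument is essentially a one-line volume estimate once the tiling is set up. The only point requiring care is that the crucial inclusion uses the \emph{full} diameter $\lambda\sqrt{d}$ of a cube (not a half-side or circumradius); using the diameter is precisely what makes the threshold on $r$ match the stated value rather than an extraneous dimensional constant.
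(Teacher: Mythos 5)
Your proof is correct and follows essentially the same route as the paper: tile by period cubes of side $\lambda$, use the mean-zero property to kill the contribution of the fully contained cubes, and bound the remaining boundary layer by the annulus $B(x,r)\setminus B(x,r-\lambda\sqrt{d})$, yielding the factor $1-(1-\lambda\sqrt{d}/r)^d$. Your explicit triangle-inequality verification of the inclusion $B(x,r-\lambda\sqrt{d})\subset F$ is a small improvement in rigor over the paper's appeal to a figure, but the argument is the same.
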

\begin{proof}
For $r\geqslant \frac{\lambda\sqrt{d}}{1-(1-\kappa)^{1/d}}>\lambda\sqrt{d}$, given any $d$-ball with radius $r$, we pack it with the $d$-cubes of side length $\lambda$. Let $\mathcal{A}$ be the union of cubes which are contained in the $d$-ball with radius $r$ entirely. Let $\mathcal{B}$ be the union of cubes, parts of which are outside of $d$-ball with radius $r$. Since the maximum distance in any $d$-cube is $\lambda\sqrt{d}$, the distance between the sphere of $d$-ball with radius $r$ and $\mathcal{A}$ is less than $\lambda\sqrt{d}$. Please refer to \prettyref{fig: pack}.

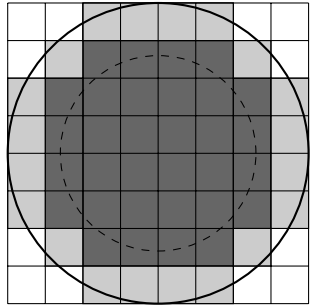
\begin{figure}[h]
\centering
\begin{tikzpicture}[scale=2]

\draw[fill=black!60!white] (0.5,0.25) rectangle (1.5,1.75);
\draw[fill=black!60!white] (0.25,0.5) rectangle (0.5,1.5);
\draw[fill=black!60!white] (1.5,0.5) rectangle (1.75,1.5);

\draw[fill=black!20!white] (0,0.5) rectangle (0.25,1.5);
\draw[fill=black!20!white] (1.75,0.5) rectangle (2,1.5);
\draw[fill=black!20!white] (0.5,0) rectangle (1.5,0.25);
\draw[fill=black!20!white] (0.5,1.75) rectangle (1.5,2);
\draw[fill=black!20!white] (0.25,1.5) rectangle (0.5,1.75);
\draw[fill=black!20!white] (0.25,0.25) rectangle (0.5,0.5);
\draw[fill=black!20!white] (0.5,1.75) rectangle (1.5,2);
\draw[fill=black!20!white] (1.5,0.25) rectangle (1.75,0.5);
\draw[fill=black!20!white] (1.5,1.5) rectangle (1.75,1.75);

\draw[thick] (1,1) circle (1);
\draw[dashed] (1,1) circle (0.65);
\draw [step=0.25] (0,0) grid (2,2);

\end{tikzpicture}
\caption{$d$-cubes with side length $\lambda$ are packed into a $d$-ball with radius $r$. $\mathcal{A}$ is in dark grey; $\mathcal{B}$ is in light grey. The radius of solid circle is $r$; the radius of dashed circle is $r-\lambda\sqrt{d}$.\label{fig: pack}}
\end{figure}

Let $\alpha_d$ denote the Lebesgue measure of unit $d$-ball. We note that $\rho$ is mean-zero in each $d-$cube with side length $\lambda$, thus
\begin{align*}
    \abs{A_r(\rho)(x)}&=\frac{\abs{\int_{\mathcal{A}}\rho(y)\diff{y}+\int_{\mathcal{B}\cap B(x,r)}\rho(y)\diff{y}}}{\alpha_d r^d}\\
    &=\frac{\abs{\int_{\mathcal{B}\cap B(x,r)}\rho(y)\diff{y}}}{\alpha_d r^d}\\
    &\leqslant \frac{\abs{\mathcal{B}\cap B(x,r)}}{\alpha_d r^d}\norm{\rho}_{L^{\infty}}\\
    &\leqslant \frac{\alpha_d(r^d-(r-\lambda\sqrt{d})^d)}{\alpha_d r^d}\norm{\rho}_{L^{\infty}}\\
    &=(1-(1-\frac{\lambda\sqrt{d}}{r})^d)\norm{\rho}_{L^{\infty}}\\
    &\leqslant \kappa\norm{\rho}_{L^{\infty}}.
\end{align*}
\end{proof}
\begin{remark}
In particular, for $d=2$, $r\geqslant \frac{\sqrt{2}\lambda}{1-(1-\kappa)^{1/2}}=\frac{\sqrt{2}\lambda(1+(1-\kappa)^{1/2})}{\kappa}$. This estimation is slightly better than the estimation in \cite{Alberti2019Expoential} where $r\geqslant \frac{4\sqrt{2}\lambda}{\kappa}$.
\end{remark}
\begin{corollary}
Let $\rho\in L^{\infty}([0,\lambda]^d)$ be a mean-zero scalar field. Given $0<\kappa<1$, $\SG(\rho;\kappa)$ exists and $\SG(\rho;\kappa)\leqslant \frac{\lambda\sqrt{d}}{1-(1-\kappa)^{1/d}}$. Moreover, for any $r\geqslant \SG(\rho;\kappa)$ and any $x\in [0,\lambda]^d$,
\begin{align*}
    \abs{A_r\rho(x)}\leqslant\kappa \norm{\rho}_{L^{\infty}}.
\end{align*}
\end{corollary}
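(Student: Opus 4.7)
The plan is to read off the existence and the upper bound directly from \prettyref{lem: UBSGMS}, and then do a short continuity argument for the ``moreover'' clause. Set $R_0 \defeq \frac{\lambda\sqrt{d}}{1-(1-\kappa)^{1/d}}$. For each fixed $x\in[0,\lambda]^d$, write
\[
S_x \defeq \Bigl\{\varepsilon_x \ge 0 \,:\, |A_r\rho(x)| \le \kappa\|\rho\|_{L^\infty} \text{ for all } r\ge \varepsilon_x \Bigr\},
\]
so that by definition $\SG(\rho;\kappa) = \sup_{x}\inf S_x$. The lemma tells me that $R_0 \in S_x$ for every $x$, hence $S_x\ne\emptyset$, the infimum $\varepsilon_x^{*}\defeq\inf S_x$ exists in $[0,R_0]$, and taking the supremum gives $\SG(\rho;\kappa)\le R_0$.

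For the second assertion I need to upgrade ``$\varepsilon_x^{*}\le r$'' to the actual pointwise inequality $|A_r\rho(x)|\le \kappa\|\rho\|_{L^\infty}$. First I would observe that the set $T_x \defeq \{r>0 : |A_r\rho(x)|\le \kappa\|\rho\|_{L^\infty}\}$ is closed in $(0,\infty)$, because the map $r\mapsto A_r\rho(x) = \frac{1}{\alpha_d r^d}\int \rho(y)\mathbf{1}_{|y-x|<r}\,dy$ is continuous in $r$ by dominated convergence (here $\rho\in L^\infty$ and $|B(x,r)\triangle B(x,r_0)|\to 0$ as $r\to r_0$). By the upward-closedness of $S_x$, if $r>\varepsilon_x^{*}$ I can pick $\varepsilon \in S_x$ with $\varepsilon \le r$ and conclude $r\in T_x$; for the boundary case $r=\varepsilon_x^{*}$ I approach from the right and invoke closedness of $T_x$. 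Hence $[\varepsilon_x^{*},\infty)\subseteq T_x$ for every $x$.

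Finally, given any $r\ge \SG(\rho;\kappa) = \sup_x \varepsilon_x^{*}$, one has $r\ge \varepsilon_x^{*}$ for every $x\in[0,\lambda]^d$, so by the inclusion just established $r\in T_x$ for every $x$, which is exactly the claimed uniform bound $|A_r\rho(x)|\le \kappa\|\rho\|_{L^\infty}$.

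The only mildly nontrivial step is the closedness of $T_x$, i.e., the continuity of $r\mapsto A_r\rho(x)$; everything else is a bookkeeping consequence of the lemma and the order-theoretic structure of the definition. I do not expect a real obstacle here, since $\rho\in L^\infty$ makes the dominated-convergence argument immediate.
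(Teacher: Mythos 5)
Your proposal is correct and follows the same route the paper intends: the corollary is stated without proof precisely because it is meant to be read off from \prettyref{lem: UBSGMS} exactly as you do, with the lemma furnishing a uniform element of each set $S_x$ so that the infima and the supremum are finite and bounded by $\frac{\lambda\sqrt{d}}{1-(1-\kappa)^{1/d}}$. Your additional continuity argument for the boundary case $r=\varepsilon_x^{*}$ is a worthwhile detail the paper glosses over, and it is handled correctly via dominated convergence.
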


\section{Main theorem}

\begin{theorem}[Equivalence between various scales]
For a 2D mean-zero scalar field $\rho(t,\cdot)$ bounded uniform-in-time in $L^{\infty}(\mathbb{T}^2)$, assume that $\rho(t,\cdot)$ is the solution to the Cauchy problem of the transport equation with initial data $\rho_0(x)$, then the following statements are equivalent:
\begin{enumerate}
\item $\rho(t,\cdot)\rightharpoonup 0$ as $t\to\infty$ in $L^2(\mathbb{T}^2)$, i.e., $\displaystyle\lim_{t\to\infty}\int_{\T^2}\rho(x,t)\phi(x)\diff{x}=0,\quad\forall \phi\in L^2(\T^2)$.
\item $\lim_{t\to\infty}\norm{\rho(t,\cdot)}_{\dot{H}^{-1}(\mathbb{T}^2)}=0.$
\item For all $\kappa\in (0, 1)$, $\displaystyle\lim_{t\to\infty}\mathcal{G}(\rho(t,\cdot))= 0$.
\item For all $\kappa\in (0, 1)$, $\displaystyle\lim_{t\to\infty} \mathcal{SG}(\rho(t,\cdot))= 0.$
\end{enumerate}
\end{theorem}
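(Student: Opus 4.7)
I plan to close the cycle $(2)\Rightarrow(4)\Rightarrow(3)\Rightarrow(1)\Rightarrow(2)$. The arrow $(4)\Rightarrow(3)$ is immediate from $\G(\rho;\kappa)\leqslant\SG(\rho;\kappa)$: any $r\geqslant\SG(\rho;\kappa)$ already satisfies $\abs{A_r\rho(x)}\leqslant\kappa\norm{\rho}_{L^\infty}$ for every $x$, so it lies in the set defining $\G(\rho;\kappa)$. Let $M\defeq\sup_t\norm{\rho(t,\cdot)}_{L^\infty(\T^2)}$, which also bounds $\norm{\rho(t,\cdot)}_{L^2}$ uniformly. For $(1)\Leftrightarrow(2)$ a standard Fourier argument suffices: by Parseval, $\norm{\rho}_{\dot{H}^{-1}}^2=\sum_{k\neq 0}\abs{k}^{-2}\abs{\hat\rho(k)}^2$; in one direction, weak $L^2$ convergence drives each $\hat\rho(k,t)$ to zero and I cut the sum at $\abs{k}=K$, where the tail is dominated by $K^{-2}M^2$ uniformly in $t$; in the other direction, any $\phi\in L^2$ decomposes as $\phi_1+\phi_2$ with $\phi_1\in\dot{H}^1$ and $\norm{\phi_2}_{L^2}$ small, and I use $\abs{\langle\rho,\phi_1\rangle}\leqslant\norm{\rho}_{\dot{H}^{-1}}\norm{\phi_1}_{\dot{H}^1}$ plus $\abs{\langle\rho,\phi_2\rangle}\leqslant M\norm{\phi_2}_{L^2}$.

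\textbf{The main obstacle, $(2)\Rightarrow(4)$.} Here I aim for a quantitative estimate of the shape $\SG(\rho(t,\cdot);\kappa)\leqslant C_\kappa\norm{\rho(t,\cdot)}_{\dot{H}^{-1}}^{1/2}$. Writing $A_r\rho=K_r*\rho$ with $K_r=\chi_{B(0,r)}/\abs{B(0,r)}$, I would like to invoke $\dot{H}^{-1}$--$\dot{H}^1$ duality directly, but $K_r\notin\dot{H}^1$. My workaround is to mollify at a scale $\delta\ll r$: let $K_r^\delta$ be a smoothing of $K_r$ and split
\[A_r\rho(x)=(K_r^\delta*\rho)(x)+((K_r-K_r^\delta)*\rho)(x).\]
Young's inequality controls the second summand by $\norm{K_r-K_r^\delta}_{L^1}M\lesssim(\delta/r)M$, since the difference is supported on an annulus of thickness $\delta$ with amplitude of order $r^{-2}$. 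A direct computation gives $\norm{K_r^\delta}_{\dot{H}^1}\lesssim(r^3\delta)^{-1/2}$ (gradient of order $r^{-2}\delta^{-1}$ supported on an annulus of area of order $r\delta$); since $\rho$ is mean-zero I may subtract the constant $\int K_r^\delta$ from the test function without changing the pairing, and obtain
\[\abs{(K_r^\delta*\rho)(x)}\leqslant\norm{\rho(t,\cdot)}_{\dot{H}^{-1}}\norm{K_r^\delta}_{\dot{H}^1}\lesssim(r^3\delta)^{-1/2}\norm{\rho(t,\cdot)}_{\dot{H}^{-1}}.\]
The choice $\delta\sim\kappa r$ balances the two errors at $\lesssim\kappa M$ provided $r\geqslant C_\kappa\norm{\rho(t,\cdot)}_{\dot{H}^{-1}}^{1/2}$, which is exactly the claim on $\SG$.

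\textbf{Closing the cycle, $(3)\Rightarrow(1)$.} Here I use the self-adjointness $\int_{\T^2}(A_r\rho)\phi=\int_{\T^2}\rho(A_r\phi)$, which follows from the symmetry of the ball kernel. Given $\phi\in L^2$ and $\varepsilon>0$, fix $\kappa$ small enough that $\kappa M\norm{\phi}_{L^1}<\varepsilon$. For each $t$, the infimum definition of $\G$ lets me pick $r_t\in[\G(\rho(t,\cdot);\kappa),\G(\rho(t,\cdot);\kappa)+t^{-1}]$ that actually realizes $\sup_x\abs{A_{r_t}\rho(t,\cdot)(x)}\leqslant\kappa M$; note this detour around the defect example is necessary precisely because $r>\G$ does not automatically mean mixing at scale $r$. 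Since $\G(\rho(t,\cdot);\kappa)\to 0$, also $r_t\to 0$, so $\norm{\phi-A_{r_t}\phi}_{L^2}\to 0$ by the standard $L^2$ convergence of ball averages to the identity. The decomposition
\[\int_{\T^2}\rho(t,\cdot)\phi=\int_{\T^2}\rho(t,\cdot)(\phi-A_{r_t}\phi)+\int_{\T^2}(A_{r_t}\rho(t,\cdot))\phi\]
gives $\abs{\int_{\T^2}\rho(t,\cdot)\phi}\leqslant M\norm{\phi-A_{r_t}\phi}_{L^2}+\kappa M\norm{\phi}_{L^1}\leqslant 2\varepsilon$ for $t$ large, which closes the equivalence. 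The transport equation itself enters only through the uniform $L^\infty$ bound and the preservation of mean zero.
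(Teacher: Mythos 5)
Your proposal is correct, but it closes the equivalence by a genuinely different route than the paper. The paper cites \cite{lin2011optimal} for $1\Leftrightarrow 2$, proves $1\Rightarrow 3$ and $1\Rightarrow 4$ by testing the weak convergence against the normalized indicators $\Id_{B(x,\varepsilon)}/\abs{B(x,\varepsilon)}$, proves $4\Rightarrow 1$ by covering $\T^2$ with finitely many small disjoint balls and splitting a continuous test function over that cover, and handles $3\Rightarrow 1$ by invoking Crippa's lemma, which upgrades smallness of $\abs{A_\delta\rho}$ at one tiny scale $\delta$ to control of $\abs{A_r\rho}$ at every larger scale $r$ (thereby repairing the defect of $\G$ and allowing the $4\Rightarrow 1$ argument to be rerun). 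You instead run the single cycle $2\Rightarrow 4\Rightarrow 3\Rightarrow 1\Rightarrow 2$. Your $2\Rightarrow 4$ is a quantitative mollification-plus-duality estimate $\SG(\rho;\kappa)\lesssim_\kappa\norm{\rho}_{\dot{H}^{-1}}^{1/2}$ that the paper does not contain at all; it has the side benefit of being automatically uniform in $x$ (the $\dot{H}^1$ norm of the translated kernel is translation-invariant), which quietly sidesteps the quantifier issue lurking in the paper's $1\Rightarrow 3$ step, where the time $T_0$ from weak convergence a priori depends on the test function and hence on $x$. Your $4\Rightarrow 3$ via $\G\leqslant\SG$ is immediate and correct. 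Your $3\Rightarrow 1$ replaces both the covering argument and Crippa's lemma with the self-adjointness identity $\int(A_r\rho)\phi=\int\rho(A_r\phi)$ together with the observation that the infimum defining $\G$ guarantees at least one admissible radius $r_t\to 0$; this is shorter and handles the defect of $\G$ just as effectively, since you only ever need mixing at that single realized radius rather than at all radii above $\G$. The trade-off is that your argument leans on Fourier duality and kernel estimates, whereas the paper's route (and Crippa's lemma in particular) is more elementary and of independent interest; in exchange you obtain an explicit rate relating the functional and strong geometric scales rather than a purely qualitative equivalence.
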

\begin{proof}
``$1\Longleftrightarrow 2$'' is proved in \cite{lin2011optimal}.

``$1\Longrightarrow 3$'': For any $\kappa\in (0,1)$ and fixed $\varepsilon>0$, there exists $\delta=\kappa$ such that: for any $x\in \T^2$, there exists a $T_0$ such that for any $t\geqslant T_0$, we have $\abs{\int_{\T^2}\rho(y,t)\phi(y)\diff{y}}\leqslant \delta$ for any $\phi(y)\in L^2(\T^2)$. We pick

\[\phi(y;x)=\frac{\Id_{B(x,\varepsilon)}(y)}{\abs{B(x,\varepsilon)}\norm{\rho}_{L^{\infty}}}\in L^2(\T^2).\]
Thus
\[\abs{\int_{\T^2}\rho(y;t)\phi(y)\diff{y}}=\abs{\frac{\int_{B(x,\varepsilon)}\rho(y,t)\diff{y}}{\abs{B(x,\varepsilon)}\norm{\rho}_{\infty}}}\leqslant \delta,\]
which implies that $\abs{A_{\varepsilon}\rho(x;t)}\leqslant \delta\norm{\rho}_{\infty}=\kappa\norm{\rho}_{\infty}$ for any $x\in\T^2$. Furthermore, by the definition of geometric mixing scale, $\G(\rho(t);\kappa)\leqslant \varepsilon$. 

Since we can pick $\varepsilon$ sufficiently small, \[\displaystyle\lim_{t\to\infty}\G(\rho(t))= 0.\]

``$1\Longrightarrow 4$'': This proof is the same with ``$1\Longrightarrow 3$''.

``$4\Longrightarrow 1$'': This proof is inspired by \cite{Crippa2019Polynomial}.

Since continuous function is dense in $L^2(\T^2)$, it suffices to only consider continuous test function $\phi\in L^2(\T^2)$. Given $\delta>0$, we need to show there exists a $T_0>0$ such that for $t\geqslant T_0$, we have
\[\abs{\int_{\T^2} \rho(x,t)\phi(x)\diff{x}}\leqslant \delta.\]

Since $\phi$ is uniformly continuous, there exists a $\varepsilon$ such that whenever $y\in B(x,\varepsilon)$ implies $\abs{\phi(x)-\phi(y)}\leqslant \frac{\delta}{3\norm{\rho}_{\infty}}$. Furthermore, we choose a finite family of disjoint ball $\{B(x_i,r_i)\}_{i=1}^n$ such that
\[B(x_i,r_i)\subset \T^2,\qquad r_i\leqslant\varepsilon\qquad\mathrm{and}\qquad \abs{\T^2\backslash\bigcup_{i=1}^n B(x_i,r_i)}\leqslant \frac{\delta}{3\norm{\rho}_{\infty}\norm{\phi}_{\infty}}.\]

So the integral
\[\abs{\int_{\T^2}\rho(t,x)\phi(x)\diff{x}} \leqslant \underbrace{\abs{\int_{\cup_{i=1}^n B(x_i,r_i)}\rho(t,x)\phi(x)\diff{x}}}_{\RN{1}} + \underbrace{ \abs{\int_{\T^2\backslash\bigcup_{i=1}^n B(x_i,r_i)}\rho(t,x)\phi(x)\diff{x}}}_{\RN{2}},\]
For the second part, we have
\[\RN{2}\leqslant\norm{\rho}_{\infty}\norm{\phi}_{\infty}\abs{\T^2\backslash \bigcup_{i=1}^n B(x_i,r_i)}\leqslant\frac{\delta}{3};\]
For the first part, we have
\[
\begin{aligned}
\RN{1}\leqslant& \sum_{i=1}^n\abs{\int_{B(x_i,r_i)}(\phi(y)-\phi(x_i)+\phi(x_i))\rho(t,y)\diff{y}}\\
\leqslant & \underbrace{\sum_{i=1}^n \abs{\phi(x_i)}\abs{\int_{B(x_i,r_i)}\rho(t,y)\diff{y}}}_{\RN{1}_a} +\underbrace{\sum_{i=1}^n \max_{y\in B(x_i,r_i)}{\abs{\phi(y)-\phi(x_i)}}\int_{B(x_i,r_i)}\abs{\rho(t,y)}\diff{y}}_{\RN{1}_b}\\
\end{aligned}
\]
Since $\displaystyle\lim_{t\to\infty} \SG(\rho(t,\cdot))= 0$ for all $\kappa\in (0,1)$, there exist a $T_0=T_0(\kappa)$ such that for $t\geqslant T_0$, $\SG(\rho(t,\cdot))\leqslant \min r_i$. By the property of strong geometric mixing scale,

\[\RN{1}_a=\sum_{i=1}^n \abs{\phi(x_i)}\abs{\int_{B(x_i,r_i)}\rho(t,y)\diff{y}}\leqslant \kappa\norm{\rho}_{\infty}\sum_{i=1}^n\abs{\phi(x_i)}\abs{B(x_i,r_i)}\leqslant\frac{\delta}{3},\]
where it is trivial if $\phi(x_i)=0$ for all $i$, otherwise we pick $\kappa=\frac{\delta}{3\norm{\rho}_{\infty}\sum_{i=1}^n \abs{\phi(x_i)}\abs{B(x_i,r_i)}}$.

\[\RN{1}_b=\sum_{i=1}^n \max_{y\in B(x_i,r_i)}{\abs{\phi(y)-\phi(x_i)}}\int_{B(x_i,r_i)}\abs{\rho(t,y)}\diff{y}\leqslant\frac{\delta}{3\norm{\rho}_{\infty}}\sum_{i=1}^n\int_{B(x_i,r_i)}\abs{\rho(t,y)}\diff{y}\leq\frac{\delta}{3}.\]

Combine $\RN{1}_a, \RN{1}_b$ and $\RN{2}$, we conclude that for any given $\delta>0$, there exists a $T_0>0$ such that for all $t>T_0$ we have
\[\abs{\int_{\T^2}\rho(x,t)\phi(x)\diff{x}}\leqslant \delta.\]

``$3\Longrightarrow 1$'': We cannot repeat the proof of ``$4\Longrightarrow 1$'' for geometric mixing scale directly, because the defect of geometric mixing scale causes that $G(\rho(t,\cdot))\leqslant \min r_i$ cannot imply that
\begin{align}
\label{eq: avg_cri}
    \int_{B(x_i,r_i)}\rho(t,y)\diff{y}\leqslant \kappa\norm{\rho}_{\infty}\abs{B(x_i,r_i)}.
\end{align}

However it is resolvable through the following lemma \cite{Crippa2017personal} by Gianluca Crippa in a private communication. Once we have shown that for each $i$, \eqref{eq: avg_cri} holds. The proof of ``$4\Longrightarrow 1$'' can be repeated.
\end{proof}

\begin{lemma}[\cite{Crippa2017personal}]
If for all $\kappa>0$, $\displaystyle\lim_{t\to\infty}\G(\rho(t,\cdot);\kappa)\to 0$, then $\forall r>0$, $\kappa>0$ there exist a $T_0$ such that
\[\abs{\int_{B(x,r)}\rho(t,y)\diff{y}}\leqslant \kappa\norm{\rho}_{
\infty}\abs{B(x,r)},\]
for all $x\in \T^2$ and $t\geqslant T_0$.
\end{lemma}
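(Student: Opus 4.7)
The plan is to compare the average of $\rho$ over the prescribed ball $B(x,r)$ with a ``pre-smoothed'' version obtained by first applying the small-scale averaging operator $A_{r_0}$, where $r_0:=\G(\rho(t,\cdot);\kappa')$ for an auxiliary accuracy $\kappa'>0$ to be fixed later. By the hypothesis, at times $t$ sufficiently large, $r_0$ can be made arbitrarily small, and the very definition of $\G$ (together with continuity of $r\mapsto A_r\rho(y)$ in $r$, which makes the defining set closed so that the infimum is attained) forces the pointwise bound $\abs{A_{r_0}\rho(t,y)}\leqslant\kappa'\norm{\rho}_{L^{\infty}}$ for every $y\in\T^2$.

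Swapping the order of integration yields the identity
\[
\int_{B(x,r)} A_{r_0}\rho(t,y)\,\diff{y}\;=\;\int_{\T^2}\rho(t,z)\,\frac{\abs{B(x,r)\cap B(z,r_0)}}{\abs{B(0,r_0)}}\,\diff{z}.
\]
The weight equals $1$ when $B(z,r_0)\subset B(x,r)$, vanishes when $B(z,r_0)\cap B(x,r)=\emptyset$, and lies in $[0,1]$ on the annulus $\{z:\, r-r_0\leqslant d(z,x)\leqslant r+r_0\}$. Provided $r$ is below the injectivity radius of $\T^2$ (the complementary regime being handled trivially by the mean-zero assumption), this annulus has measure at most $4\pi r r_0$, so
\[
\abs{\int_{B(x,r)}\rho(t,z)\,\diff{z}-\int_{B(x,r)}A_{r_0}\rho(t,y)\,\diff{y}}\;\leqslant\; 4\pi r r_0\,\norm{\rho}_{L^{\infty}},
\]
while the pointwise bound on $A_{r_0}\rho$ controls the mollified integral by $\kappa'\abs{B(x,r)}\norm{\rho}_{L^{\infty}}$. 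Combining,
\[
\abs{\int_{B(x,r)}\rho(t,y)\,\diff{y}}\;\leqslant\; \abs{B(x,r)}\,\norm{\rho}_{L^{\infty}}\left(\kappa'+\frac{4 r_0}{r}\right).
\]

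Given the prescribed $\kappa>0$ and $r>0$, I would set $\kappa':=\kappa/2$ and invoke the hypothesis at accuracy $\kappa/2$ to extract a time $T_0$ such that $r_0=\G(\rho(t,\cdot);\kappa/2)\leqslant \kappa r/8$ for all $t\geqslant T_0$. Both contributions on the right are then at most $\tfrac{\kappa}{2}\abs{B(x,r)}\norm{\rho}_{L^{\infty}}$, and their sum delivers the required inequality.

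The main conceptual obstacle to overcome is precisely the defect of the geometric mixing scale emphasized earlier in the paper: one cannot simply tile $B(x,r)$ by a disjoint union of balls of radius $r_0$, because in two dimensions disjoint equal discs pack at density at most $\pi/(2\sqrt{3})\approx 0.907$, leaving an irreducible gap that would prevent reaching arbitrary $\kappa$. The Fubini identity above sidesteps this entirely by replacing the combinatorial tiling with an integral mollification, whose unavoidable ``waste'' is confined to a thin annular strip of measure $O(r r_0)$ and hence becomes negligible relative to $\abs{B(x,r)}$ as $r_0\to 0$.
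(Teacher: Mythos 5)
Your proof is correct and follows essentially the same route as the paper's: mollify $\rho$ by averaging over balls of a small radius controlled by the hypothesis, use Fubini to compare the mollified integral with $\int_{B(x,r)}\rho$, and absorb the discrepancy into an annulus of measure $O(r r_0)$. The only cosmetic differences are that you compare the Fubini weight directly with the indicator of $B(x,r)$ (getting a single annulus error rather than the paper's two) and that you take the mollification radius to be $\G(\rho(t,\cdot);\kappa')$ itself via a continuity argument, where the paper simply selects an admissible radius below $\kappa r/12$; neither affects the validity of the argument.
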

\begin{proof}
Fix $\kappa>0$ and $r>0$, since for $\kappa'=\frac{\kappa}{2}$ we have that $\G(\rho(t,\cdot),\kappa')\to 0$, there exists a time $T_0$ such that for all $t\geqslant T_0$, there exists a radius $\delta\leqslant \frac{\kappa}{12}r$ such that for all $x\in\T^2$
\[\abs{\int_{B(x,\delta)}\rho(t,y)\diff{y}}\leqslant \kappa'\norm{\rho}_{\infty}\abs{B(x,\delta)}=\frac{\kappa}{2}\norm{\rho}_{\infty}\abs{B(x,\delta)}.\]

For an arbitrary $x$ and $t\geqslant T_0$, we estimate
\[
\begin{aligned}
    \int_{B(x,r)}\int_{B(z,\delta)}\rho(t,y)\diff{y}\diff{z}&=\int_{B(x,r)}\int_{B(x,r+\delta)}\Id_{B(z,\delta)}(y)\rho(t,y)\diff{y}\diff{z}\\
    &=\int_{B(x,r+\delta)}\rho(t,y)\int_{B(x,r)}\Id_{B(y,\delta)}(z)\diff{z}\diff{y}\\
    &=\int_{B(x,r-\delta)}\rho(t,y)\int_{B(x,r)}\Id_{B(y,\delta)}(z)\diff{z}\diff{y}\\
    &\qquad\qquad\qquad+\int_{B(x,r+\delta)\backslash B(x,r-\delta)}\rho(t,y)\int_{B(x,r)}\Id_{B(y,\delta)}(z)\diff{z}\diff{y}\\
    &=\abs{B(z,\delta)}\int_{B(x,r-\delta)}\rho(t,y)\diff{y}\\
    &\qquad\qquad\qquad+\int_{B(x,r+\delta)\backslash B(x,r-\delta)}\rho(t,y)\int_{B(x,r)}\Id_{B(y,\delta)}(z)\diff{z}\diff{y}.
\end{aligned}
\]
Thus, we have
\[
\begin{aligned}
    \abs{\int_{B(x,r)}\rho(t,y)\diff{y}}&\leqslant \abs{\int_{B(x,r-\delta)}\rho(t,y)\diff{y}}+\norm{\rho}_{\infty}\abs{B(x,r)\backslash B(x,r-\delta)}\\
    &\leqslant \abs{\int_{B(x,r)}\avintdis_{B(z,\delta)}\rho(t,y)\diff{y}\diff{z}}+\norm{\rho}_{\infty}\left(\abs{B(x,r+\delta)\backslash B(x,r-\delta)}+\abs{B(x,r)\backslash B(x,r-\delta)}\right)\\
    &\leqslant \left(\frac{\kappa}{2}+4\frac{\delta}{r}+2\frac{\delta}{r}-(\frac{\delta}{r})^2\right)\norm{\rho}_{\infty}\abs{B(x,r)},
\end{aligned}
\]
Using that $\delta\leqslant \frac{\kappa}{12}r$, we conclude that 
\[\int_{B(x,r)}\rho(t,y)\diff{y}\leqslant \kappa\norm{\rho}_{\infty}\abs{B(x,r)}\]
for all $x\in\T^2$ and all $t\geqslant T_0$.
\end{proof}

\bibliographystyle{alpha}
\bibliography{cite}
\end{document}